\documentclass[a4paper,12pt]{article}

\usepackage{amssymb, amsmath, latexsym, amsthm, cite}

\newcommand{\Q}{\mathbb{Q}}

\newcommand{\F}{\mathbb{F}}

\newcommand{\N}{\mathbb{N}}

\renewcommand{\b}[1]{{\bf #1}}

\newcommand{\x}{\b{x}}

\newcommand{\Z}{\mathbb{Z}}




\newtheorem{theorem}{Theorem}

\newtheorem{lemma}{Lemma}

\author{Jahan Zahid}

\date{}

\title{Non-singular points on Hypersurfaces over $\F_{q}$}

\begin{document}

\maketitle

\section{Introduction}

In this survey we investigate hypersurfaces defined over finite fields $\F_{q}$. More specifically we wish to determine for which hypersurfaces one can ensure the existence of a non-singular point taking the cardinality $q$ of our ambient field large if need be. Additionally for such hypersurfaces we will find a lower bound on $q$ for which a non-singular point is guaranteed. 

We shall assume that our hypersurface is projective and is defined by the homogeneous polynomial
\[ F(x_{1},\ldots,x_{n}) \in \F_{q}[x_{1},\ldots,x_{n}], \]
in $n \ge 3$ variables. Recall that a non-singular point $\x \in \F_{q}^{n}$ is one in which 
\[ \nabla F(\x) := \bigg(\frac{\partial F}{\partial x_{1}}(\x),\ldots,\frac{\partial F}{\partial x_{n}}(\x) \bigg) \ne \b{0}. \]

\subsection{Motivation} 

It is a classical theorem of Chevalley \& Warning that every form $F$ of degree $d$ in $n>d$ variables has a non-trivial zero in $\F_{q}$. However there seems to be much less in the literature giving conditions which ensure that $F$ has a non-singular point. This question is worth addressing for a number of reasons. For example suppose we have a form $\mathfrak{F}$ defined over the $p$-adic numbers $\Q_{p}$, we may wish to determine whether or not $\mathfrak{F}$ has a non-trivial zero. This is of course a requisite for the corresponding form to have a non-trivial zero in the rational numbers $\Q$. A standard and well known method for determining the existence of a $p$-adic point is by an application of Hensel's lemma viz. assume without loss of generality $\mathfrak{F}$ is defined over $\Z_{p}$, then if there is some $\x$ over $\Z_{p}$ such that 
\[ \mathfrak{F}(\x) \equiv 0 \pmod{p} \qquad \mbox{and} \qquad \nabla \mathfrak{F}(\x) \not\equiv \b{0} \pmod{p}  \]
then $\mathfrak{F}$ has a non-trivial $p$-adic point. 

A conjecture of Artin \cite[Preface]{Art65} states that every $p$-adic form of degree $d$ in $n> d^{2}$ variables has a non-trivial zero. There exist forms in $n=d^{2}$ variables with only the trivial zero in every $p$-adic field, so this is the best we can hope for. The conjecture is false in general (see for example \cite{Ter66}). However by a rather remarkable theorem of Ax \& Kochen \cite{AxKoc65} it is known to be true for any fixed degree $d$, provided the characteristic of the residue class field is large enough. A reasonable lower bound for the characteristic required is only known in a handful of cases. For example, recently Wooley \cite{Woo08} has shown that for $d=7$ and $11$ we require the size of the residue class field to respectively exceed $883$ and $8053$. These estimates depend on the ability to ascertain the existence of a non-singular point on the $\F_{q}$-varieties under consideration. It is often in this regard that we are motivated to seek non-singular points on varieties.

The existence of a $p$-adic point is also crucial for applications of the Hardy-Littlewood circle method where one requires a certain ``singular series'' to be positive. The singular series of some function is positive precisely when that function has a non-singular $p$-adic point for every $p$.

\subsection{Varieties with a non-singular point} 

We necessarily require some kind of classification of our variety $F(\x)=0$, since there are examples such as $F(\x) = G(\x)^{2}$ with only singular zeros. After factorizing $F$ over the algebraic closure $\bar{\F}_{q}$ we may write
\begin{equation} \label{factors}
F(\x)= F_{1}(\x)^{a_{1}} \cdots F_{r}(\x)^{a_{r}} 
\end{equation}
where $a_{i} \in \N$ and each $F_{i}$ is absolutely irreducible and pairwise coprime. If $a_{i} \ge 2$ for each $i$, then it is clear that $F$ has only singular points. We may therefore assume henceforth that $a_{1} = 1$. Consequently we can write 
\[ F(\x) = G(\x)H(\x) \]
where $G$ is absolutely irreducible and $G$ does not divide $H$. Since we are interested in finding non-singular points over the base field $\F_{q}$, for simplicity we shall assume that both $G$ and $H$ are defined over $\F_{q}$. 

It is clear that we can find a non-singular zero of $F$, provided that we can find a non-singular zero of $G$ which is not a zero of $H$. Since for any such point $\x \in \F_{q}^{n}$, clearly $F(\x)=0$ and
\[ \nabla F(\x) = H(\x)\nabla G(\x) \ne \b{0}. \]
This exact question has been studied by Lewis \& Schuur \cite[Theorem 1]{LewScr73} who prove the following result in their paper.

\begin{theorem}[Lewis \& Schuur, 1973] \label{lewscr73}
Let $G$ be an absolutely irreducible form of degree $d$ over $\F_{q}$ and $H$ a form of degree $e$ over $\F_{q}$ not divisible by $G$. Then there exists a function $A(d,e)$ such that if $\F_{q}$ is a finite field of cardinality $q > A(d,e)$, then there exists an $\F_{q}$-point which is a non-singular zero of $G$ and which is not a zero of $H$.
\end{theorem}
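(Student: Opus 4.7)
The plan is to establish the result by a counting argument: count the total number of $\F_q$-zeros of $G$, then subtract the number of ``bad'' zeros, namely the singular ones and those lying on $H$. The key input is a Lang--Weil type estimate for the number of $\F_q$-points on an absolutely irreducible hypersurface, together with the observation that the bad locus is a proper closed subvariety and hence has asymptotically fewer points.

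First, since $G$ is absolutely irreducible of degree $d$, the Lang--Weil theorem (or in the hypersurface case, an estimate of Schmidt) gives
\[ |\{\x \in \F_q^n : G(\x) = 0\}| = q^{n-1} + O_{d,n}(q^{n-3/2}), \]
where the implied constant depends only on $d$ and $n$. This supplies the main term of order $q^{n-1}$.

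Next I would bound the singular locus $V(G)_{\textrm{sing}}$, cut out by $G$ together with the partial derivatives $\partial G/\partial x_i$. Because $G$ is absolutely irreducible, $G$ cannot equal a $p$-th power over $\bar{\F}_q$, so at least one partial derivative is a nonzero polynomial not divisible by $G$; hence $V(G)_{\textrm{sing}}$ is a proper closed subvariety of the irreducible $V(G)$, of dimension at most $n-2$. A Schwartz--Zippel / Lang--Weil style bound then yields
\[ |V(G)_{\textrm{sing}}(\F_q)| \le C_1(d)\, q^{n-2}. \]
Similarly, since $G \nmid H$ and $G$ is absolutely irreducible, $V(G) \cap V(H)$ is a proper closed subvariety of $V(G)$ of dimension at most $n-2$, giving
\[ |V(G) \cap V(H)(\F_q)| \le C_2(d,e)\, q^{n-2}. \]

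Combining these three estimates, the count of non-singular zeros of $G$ off $V(H)$ is at least
\[ q^{n-1} - C_0(d,n)\, q^{n-3/2} - C_1(d)\, q^{n-2} - C_2(d,e)\, q^{n-2}, \]
which is strictly positive once $q$ exceeds some explicit threshold $A(d,e)$ (absorbing the dependence on $n$, which is itself bounded in terms of $d$ when $G$ is nontrivial). The main obstacle is securing the Lang--Weil bound with a constant depending only on $d$ and $n$ — this requires either invoking the full Lang--Weil machinery or a more elementary Schmidt-type estimate tailored to hypersurfaces — and verifying that absolute irreducibility is strong enough, even in positive characteristic, to ensure the singular locus is a proper subvariety. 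Once these two points are handled, the tracking of constants to produce an explicit $A(d,e)$ is essentially bookkeeping.
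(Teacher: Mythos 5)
Your proposal follows essentially the same route as the paper: it proves the explicit version (Theorem \ref{thm2}) by combining a Lang--Weil lower bound for $\#V(G)(\F_q)$ (via Cafure--Matera) with $O(q^{n-2})$ upper bounds, via B\'ezout, on the singular locus (nonempty gradient thanks to absolute irreducibility ruling out $p$-th powers) and on $V(G)\cap V(H)$. One small correction: your parenthetical claim that $n$ is bounded in terms of $d$ is false; the correct resolution, which you also gesture at, is that for hypersurfaces the Lang--Weil constant can be taken to depend only on $d$ (Schmidt, Cafure--Matera), so $A(d,e)$ is genuinely independent of $n$.
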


We shall apply a recent result of Cafure \& Matera \cite{CafMat06} in order to find a permissible value for $A(d,e)$ in the Lewis \& Schuur theorem. More precisely we will prove 

\begin{theorem} \label{thm2}
Let $G$ be an absolutely irreducible form of degree $d$ over $\F_{q}$ and $H$ a form of degree $e$ over $\F_{q}$ not divisible by $G$. Then provided that
\[ q > \tfrac{1}{4}\big(\alpha + \sqrt{\alpha^{2}+4\beta})^{2} \]
where
\[ \alpha = (d-1)(d-2) \quad \mbox{and} \quad \beta = 5d^{13/3}+d(d+e-1) \]
there is a non-singular zero of $G$ which is not a zero of $H$.
\end{theorem}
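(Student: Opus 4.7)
The plan is to use the Cafure--Matera estimate to give a lower bound on the number of $\F_q$-rational zeros of $G$ in affine $n$-space, and then to subtract the points that are singular on $V(G)$ or additionally lie on $V(H)$. The Cafure--Matera bound asserts that for an absolutely irreducible affine variety $V\subset\mathbb{A}^n$ of dimension $r$ and degree $\delta$ one has
\[ \bigl||V(\F_q)|-q^r\bigr| \leq (\delta-1)(\delta-2)\,q^{r-1/2} + 5\delta^{13/3}\,q^{r-1}. \]
Since $G$ is absolutely irreducible of degree $d$ in $n\geq 3$ variables, its zero set in $\mathbb{A}^n$ has dimension $n-1$ and degree $d$, which yields
\[ |V(G)(\F_q)| \geq q^{n-1} - (d-1)(d-2)\,q^{n-3/2} - 5d^{13/3}\,q^{n-2}. \]

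The next step is to bound the number of forbidden points. Because $G$ is absolutely irreducible, not every partial derivative $\partial G/\partial x_i$ can vanish identically, for otherwise $G$ would be a $p$-th power over $\bar{\F}_q$; fixing such an index $i$, the singular locus of $V(G)$ is contained in $V(G)\cap V(\partial G/\partial x_i)$, a variety of dimension at most $n-2$ and of degree at most $d(d-1)$ by Bezout. Similarly, since $G$ is irreducible and does not divide $H$, the intersection $V(G)\cap V(H)$ has dimension at most $n-2$ and degree at most $de$. Invoking the standard bound $|W(\F_q)|\leq (\deg W)\,q^{\dim W}$ for each of these auxiliary varieties, the total number of forbidden points does not exceed $\bigl(d(d-1)+de\bigr)q^{n-2} = d(d+e-1)\,q^{n-2}$.

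Combining these estimates, the number of admissible non-singular zeros of $G$ not lying on $V(H)$ is bounded below by
\[ q^{n-2}\Bigl(q - (d-1)(d-2)\,q^{1/2} - \bigl(5d^{13/3}+d(d+e-1)\bigr)\Bigr). \]
Setting $t=q^{1/2}$, $\alpha=(d-1)(d-2)$ and $\beta=5d^{13/3}+d(d+e-1)$, the parenthetical expression is $t^2-\alpha t-\beta$, which is strictly positive precisely when $t>\tfrac12\bigl(\alpha+\sqrt{\alpha^2+4\beta}\bigr)$; squaring yields the hypothesis of the theorem.

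The main technical hurdle I anticipate is the rigorous treatment of the singular locus: one must use absolute irreducibility to rule out the characteristic-$p$ pathology in which every partial derivative vanishes identically, and one must confirm the Bezout-type degree bound for $V(G)\cap V(\partial G/\partial x_i)$ uniformly in the degrees of the partials. A secondary point is checking that the hypotheses of the Cafure--Matera theorem are automatic in the regime of $q$ forced by the stated bound, and that the crude estimate $|W(\F_q)|\leq(\deg W)\,q^{\dim W}$ is applicable when $W$ is a possibly reducible intersection of mixed dimension.
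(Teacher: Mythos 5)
Your proposal is correct and follows essentially the same route as the paper's own proof: a lower bound from the Cafure--Matera estimate, the $p$-th power argument to produce a nonvanishing partial derivative, B\'ezout plus the $\deg \cdot q^{\dim}$ bound for the singular locus and for $V(G)\cap V(H)$, and the quadratic in $\sqrt{q}$. No gaps.
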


When the form $F$ is absolutely irreducible we can do slightly better, by applying an effective version of the first Bertini theorem, along with the sharper estimates which are available for point counting on curves. For us the first Bertini theorem says that given any absolutely irreducible hypersurface, there exists a linear variety of dimension $2$ whose intersection with the former is absolutely irreducible. The existence of this linear variety has been made quantitative by Kaltofen \cite{Kal95} which allows us to prove

\begin{theorem} \label{thm3}
Let $F$ be an absolutely irreducible form of degree $d$ over $\F_{q}$ then provided that,
\[ q > \tfrac{1}{2}(3d^{4}-4d^{3}+5d^{2}) \]
there is a non-singular zero of $F$.
\end{theorem}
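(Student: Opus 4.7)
The plan is to reduce the problem from a hypersurface in $\Proj^{n-1}$ to a plane curve, where Weil-type estimates give much sharper control than the Cafure--Matera bound used in Theorem~\ref{thm2}. First I would invoke Kaltofen's effective form of Bertini's first theorem \cite{Kal95} to produce a two-dimensional linear subspace $L \subset \Proj^{n-1}$, defined over $\F_q$, such that the plane section $C := V(F) \cap L$ is an absolutely irreducible plane curve of degree $d$. Kaltofen's result gives an explicit polynomial-in-$d$ bound on the ``bad'' locus of $2$-planes for which the section is reducible; a simple counting argument over $\F_q$ then guarantees a good $L$ over the base field as soon as $q$ exceeds an explicit threshold. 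I expect this Bertini-threshold to contribute the dominant term to the final bound.

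With $C$ in hand, one applies a Weil-type point count for absolutely irreducible plane curves of degree $d$, which yields
\[ \bigl| \#C(\F_q) - (q+1) \bigr| \le (d-1)(d-2)\sqrt{q}, \]
so that $C$ carries at least $q + 1 - (d-1)(d-2)\sqrt{q}$ points over $\F_q$. To extract a non-singular point of $F$, I would then remove from $C(\F_q)$ the points at which $\nabla F$ vanishes. Since $F$ is absolutely irreducible, the singular locus $\mathrm{Sing}(V(F))$ has codimension at least $2$ in $\Proj^{n-1}$, so for generic $L$ the intersection $L \cap \mathrm{Sing}(V(F))$ is zero-dimensional. Restricting the partials $\partial F/\partial x_i$ to the $2$-plane $L$ gives polynomials of degree $d-1$, and B\'ezout bounds the number of their common zeros on $L$ by $(d-1)^2$.

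Combining these two ingredients, the number of $\F_q$-points of $V(F)$ lying on $L$ at which $\nabla F \ne \b{0}$ is at least
\[ q + 1 - (d-1)(d-2)\sqrt{q} - (d-1)^2. \]
Requiring this to be positive reduces to a quadratic inequality in $u = \sqrt{q}$ of the form $u^2 - (d-1)(d-2)u - \bigl((d-1)^2 - 1\bigr) > 0$, which forces $q$ to exceed a quantity of order $d^4$. After simplification and comparison with Kaltofen's threshold, taking the larger of the two should yield the stated estimate $q > \tfrac{1}{2}(3d^4 - 4d^3 + 5d^2)$.

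The main obstacle I foresee is that the $2$-plane $L$ furnished by Kaltofen's theorem must simultaneously control two failure modes: reducibility of $V(F) \cap L$, and non-transversality of $L$ with $\mathrm{Sing}(V(F))$. The latter set of bad planes can either be absorbed into Kaltofen's count or handled by an auxiliary codimension argument, but matching the explicit constants so as to land on precisely $\tfrac{1}{2}(3d^4 - 4d^3 + 5d^2)$, rather than just a comparable quartic, will require careful bookkeeping between the Bertini threshold and the quadratic-in-$\sqrt{q}$ inequality coming from the curve point count.
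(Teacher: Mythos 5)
Your proposal follows the same architecture as the paper: Kaltofen's effective Bertini theorem (in the explicit Cafure--Matera form, Lemma \ref{bert}) produces, once $q > \tfrac{1}{2}(3d^{4}-4d^{3}+5d^{2})$, an $\F_{q}$-rational plane slice on which $F$ restricts to an absolutely irreducible curve, and the argument closes because the curve-counting threshold is of order $\alpha^{2}=(d-1)^{2}(d-2)^{2}\approx d^{4}$ and is dominated by the Bertini threshold. You diverge only in the curve-counting step: the paper invokes Leep \& Yeomans (Lemma \ref{leeyeo}), which lower-bounds the number of \emph{non-singular} points of the plane curve directly (each singular point lowers the genus, so the singular points are absorbed into the main estimate), whereas you use a Weil-type bound for the total number of points of the possibly singular curve and then subtract a B\'ezout bound $(d-1)^{2}$ for the singular locus. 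Your route does reach the same final constant, but two remarks are in order. First, the obstacle you flag at the end is not a real one: by the chain rule, any point of $L\cap\mathrm{Sing}(V(F))$ is a singular point of the plane section $C$, and since $C$ is absolutely irreducible, hence reduced, its singular locus is automatically finite; no second genericity condition on $L$ beyond Kaltofen's is needed. The same chain-rule observation gives the converse you actually need, namely that a non-singular point of $C$ is a non-singular point of $F$, which is precisely why the paper can stop at the Leep--Yeomans count (this implication is left implicit there too and is worth stating). Second, the inequality $|\#C(\F_{q})-(q+1)|\le (d-1)(d-2)\sqrt{q}$ must be justified for \emph{singular} absolutely irreducible plane curves (an Aubry--Perret-type statement); the smooth-curve Weil bound does not apply verbatim, and the Leep--Yeomans lemma exists exactly to package that correction.
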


We may also apply a flexible version of the Bertini theorem (as given in  \cite[Corollary 3.4]{CafMat06}) in the general case i.e.\ whenever $F(\x) = G(\x)H(\x)$. However there is more delicacy involved dealing with the degrees of each irreducible factor $F_{i}$ in (\ref{factors}). In this respect it seems difficult to asymptotically improve on the permissible lower bound for $q$, given in Theorem \ref{thm2}. One may wish to read \cite{Woo08} for further details.

 \newpage

\section{Some preliminaries}

In this section we shall review some of the elements which are available for point counting on varieties. 

\subsection{Lang--Weil estimates}

Arguably the genesis for much subsequent work on estimating the number of points on $\F_{q}$-varieties, is due to a theorem of Lang \& Weil \cite{LanWei54}.

\begin{theorem}[Lang--Weil, 1954]
Given any absolutely irreducible variety $V$ of degree $d$ and dimension $r$ in $n$-dimensional projective space over $\F_{q}$, there exists a constant $C = C(n,r,d)$ such that the number of $\F_{q}$-points $N$ of $V$ satisfies
\[ | N - q^{r} | \le (d-1)(d-2)q^{r-1/2}+Cq^{r-1}. \]
\end{theorem}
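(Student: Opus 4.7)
The plan is to establish the bound by induction on the dimension $r$, with Weil's Riemann Hypothesis for curves as the base case and hyperplane sections driving the inductive step.

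For $r = 1$, I would note that any absolutely irreducible curve $C \subset \Proj^{n}$ of degree $d$ projects birationally onto a plane curve of the same degree, whose arithmetic genus is $(d-1)(d-2)/2$. Hence the geometric genus $g$ of $C$ satisfies $2g \le (d-1)(d-2)$. Letting $\tilde{C}$ denote the smooth projective model, one has $|\#C(\F_{q}) - \#\tilde{C}(\F_{q})| = O_{d}(1)$ from the bounded number of singular points, while Weil's theorem gives
\[ |\#\tilde{C}(\F_{q}) - (q+1)| \le 2g\, q^{1/2}. \]
Combining these yields the theorem when $r = 1$, with the coefficient $(d-1)(d-2)$ arising directly from the genus bound.

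For $r \ge 2$, I would carry out a double count over the set $\mathcal{H}$ of $\F_{q}$-rational hyperplanes in $\Proj^{n}$, which has cardinality $(q^{n+1}-1)/(q-1)$. Counting the incidences $\{(\x,H) : \x \in V(\F_{q}),\, H \in \mathcal{H},\, \x \in H\}$ two ways---once through points (each $\x$ lies in $(q^{n}-1)/(q-1)$ hyperplanes) and once through hyperplane sections---gives
\[ N \cdot \frac{q^{n}-1}{q-1} = \sum_{H \in \mathcal{H}} \#(V \cap H)(\F_{q}). \]
I would partition $\mathcal{H}$ into the \emph{good} hyperplanes, for which $V \cap H$ is absolutely irreducible of dimension $r-1$ and degree $d$, and the \emph{bad} hyperplanes. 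The good contribution is handled by the inductive hypothesis applied to each $V \cap H$; the bad contribution is bounded by $d\, q^{r-1}$ per hyperplane. Solving for $N$ and using that the ratio $(q^{n+1}-1)/(q^{n}-1)$ equals $q + O(q^{-1})$ produces an estimate of the form $N = q^{r} + (d-1)(d-2) q^{r-1/2} + O(q^{r-1})$, with the coefficient of the $q^{r-1/2}$ term preserved exactly because the ratio of the two geometric series collapses cleanly to $q$.

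The main obstacle is an effective Bertini-type theorem asserting that the number of bad hyperplanes is $O_{n,d}(q^{n-1})$; without this, the bad contribution would dominate the main error. One route is to encode the condition that $V \cap H$ be reducible or of lower dimension as the vanishing of a Chow-form-style resultant on the dual projective space $(\Proj^{n})^{\vee}$, identify the resulting locus as a proper closed subvariety whose degree is bounded purely in terms of $d$ and $n$, and bound its $\F_{q}$-points by a Schwartz--Zippel style estimate. Tracking the constant $C(n,r,d)$ through the recursion, and verifying that the leading coefficient really does stay at $(d-1)(d-2)$ rather than accumulating an additional factor depending on $r$, is where the delicate bookkeeping lies.
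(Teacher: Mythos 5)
The paper does not prove this theorem at all: it is quoted from Lang and Weil \cite{LanWei54} as a known black box, and the paper immediately moves on to the explicit Cafure--Matera refinement (Theorem \ref{cafmat}), which is what is actually used later. So there is no in-paper argument to compare against; your sketch has to be judged against the classical proof in the literature, and it is in fact a faithful outline of that proof. The base case via a birational plane projection, the genus bound $2g \le (d-1)(d-2)$, Weil's Riemann Hypothesis for the smooth model, and the $O_{d}(1)$ correction for singular points is exactly how the $r=1$ case goes; and the inductive step by double counting incidences with rational hyperplanes, with the exact identity $N\cdot\frac{q^{n}-1}{q-1}=\sum_{H}\#(V\cap H)(\F_{q})$ and the observation that dividing by $\frac{q^{n}-1}{q-1}$ multiplies the per-section error by essentially $q$ (so the coefficient $(d-1)(d-2)$ of the $q^{r-1/2}$ term survives the recursion untouched), is the standard slicing argument. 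You have also correctly located the real content: the effective Bertini statement that the bad hyperplanes form a set of size $O_{n,d}(q^{n-1})$, which is precisely the ingredient that Kaltofen \cite{Kal95} and Cafure--Matera \cite[Corollary 3.2]{CafMat06} make explicit and that the paper invokes as Lemma \ref{bert}.

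Two points in your outline need slightly more care than you give them. First, the bound $\#(V\cap H)(\F_{q}) \le d\,q^{r-1}$ for a bad hyperplane presupposes that $V\cap H$ is a \emph{proper} subvariety of $V$; if $V$ is degenerate, i.e.\ contained in some rational hyperplane $H_{0}$, then $V\cap H_{0}=V$ has about $q^{r}$ points and the bad contribution is not controlled. This is harmless (reduce to the ambient $\Proj^{n-1}$ and induct on $n$), but it must be said. Second, in the base case the birational projection to a plane curve should be arranged over $\F_{q}$ (or a bounded-degree extension, with the point counts compared afterwards), and the comparison between $\#C(\F_{q})$ and $\#\tilde{C}(\F_{q})$ needs the number of singular points together with their branches to be bounded in terms of $d$ alone; both are standard but are where a careless write-up would leak a dependence on $q$. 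With those repairs the sketch is a correct skeleton of the Lang--Weil proof, with the effective Bertini step honestly flagged as the part that remains to be filled in.
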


In the case of whenever the variety $V$ is defined by a hypersurface of degree $d$ in $n$ variables over $\F_{q}^{n}$, there has been much work on finding a permissible value for the constant $C= C(n,r,d)$ in the Lang \& Weil theorem. For example Schmidt \cite[Theorem 5A, p.210]{Sch76} proved that we can take 
\begin{equation} \label{schC}
 C = 6d^{2}k^{2^{k}}, 
 \end{equation}
where $k = \tfrac{1}{2}d(d+1)$. More recently Cafure \& Matera \cite[Theorem 5.2]{CafMat06} proved the following

\begin{theorem}[Cafure \& Matera, 2006] \label{cafmat}
The number of points $N$, on an absolutely irreducible $\F_{q}$ hypersurface in $n$ variables of degree $d$ satisfies
\[ | N - q^{n-1} | \le (d-1)(d-2)q^{n-3/2} + 5d^{13/3}q^{n-2}. \]
\end{theorem}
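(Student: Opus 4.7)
The plan is to prove the bound by induction on $n$, the number of variables, taking Weil's theorem for absolutely irreducible curves as the base case and driving the inductive step with a quantitative hyperplane-pencil / Bertini argument.

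For the base case $n=2$, let $C \subset \mathbb{A}^{2}$ be an absolutely irreducible affine curve of degree $d$. Its projective closure $\bar{C}$ has geometric genus $g \le (d-1)(d-2)/2$, so Hasse--Weil applied to the smooth model yields $|N' - (q+1)| \le (d-1)(d-2) q^{1/2}$, where $N'$ is the number of $\F_{q}$-points on the normalization. Translating back to $\F_{q}$-points of $C \subset \mathbb{A}^{2}$ costs at most $O(d^{13/3})$ corrections coming from points at infinity, singular points of $C$, and the fibres of the normalization map above those singular points; bounding these via delta-invariants (which are controlled by $\binom{d-1}{2}$ in total) gives the base-case constant $5d^{13/3}$.

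For the inductive step, assume the bound holds in $n-1$ variables and let $V \subset \mathbb{A}^{n}$ be absolutely irreducible of degree $d$. After a generic $\F_{q}$-linear change of coordinates---with the small-$q$ regime (say $q \lesssim d^{13/3}$) handled separately by the trivial Schwartz--Zippel-type estimate $N \le d\, q^{n-1}$, which sits comfortably inside the claimed right-hand side---I would arrange that the projection $\pi : V \to \mathbb{A}^{1}$ onto $x_{n}$ is finite of degree $d$ and admits a flexible Bertini theorem: for all but an exceptional set $\mathcal{E} \subset \F_{q}$ of cardinality polynomially bounded in $d$, the fibre $V_{t} := V \cap \{x_{n}=t\}$ is an absolutely irreducible hypersurface of dimension $n-2$ and degree $d$ in $n-1$ variables. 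Splitting
\[ N = \sum_{t \in \F_{q} \setminus \mathcal{E}} |V_{t}(\F_{q})| \; + \; \sum_{t \in \mathcal{E}} |V_{t}(\F_{q})|, \]
I would apply the inductive hypothesis to each good fibre, recovering the main term $q \cdot q^{n-2} = q^{n-1}$ and the Weil-type error $(d-1)(d-2) q^{n-3/2}$, while the bad fibres contribute at most $|\mathcal{E}| \cdot d\, q^{n-2}$, which, together with the lower-order piece from the inductive error, can be absorbed into the secondary term $5d^{13/3} q^{n-2}$.

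The main obstacle is bookkeeping the explicit constants rather than the inductive scheme itself. Specifically, one needs an effective Bertini-type statement producing an exceptional set of size at most $c\, d^{\alpha}$ with $\alpha \le 13/3$, typically obtained by bounding a discriminant or resultant in $\F_{q}[x_{1},\ldots,x_{n-1}][x_{n}]$; simultaneously the base-case loss incurred in passing between $C$ and its normalization must be controlled within the same $5d^{13/3}$ budget. Balancing these two sources, and verifying that the inductive error term does not deteriorate when it is added to the bad-fibre contribution at each stage, is where the non-trivial exponent $13/3$ of the secondary term is forced.
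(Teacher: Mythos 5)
First, a point of comparison: the paper does not prove this statement at all --- it is imported verbatim from Cafure \& Matera \cite[Theorem 5.2]{CafMat06} --- so there is no internal proof to measure your attempt against. Judged on its own terms, your inductive scheme has a structural gap that I do not believe can be repaired in the form you describe. The decisive problem is that an induction on $n$ with an exact fibration cannot close with a secondary constant independent of $n$, yet the theorem asserts exactly such a uniform constant. In your inductive step you write $N = \sum_{t \notin \mathcal{E}} |V_{t}(\F_{q})| + \sum_{t \in \mathcal{E}} |V_{t}(\F_{q})|$ and apply the hypothesis to the good fibres. Summing the inherited secondary errors $5d^{13/3}q^{n-3}$ over the (up to $q$) good fibres already produces $5d^{13/3}q^{n-2}$, i.e.\ it saturates the entire secondary budget at level $n$ before the bad fibres contribute anything. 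The bad fibres then add $\sum_{t \in \mathcal{E}} \bigl| |V_{t}(\F_{q})| - q^{n-2} \bigr| \le |\mathcal{E}|\,(d-1)q^{n-2}$ (by Lemma \ref{uppbd1}), which is strictly positive unless $\mathcal{E}$ is empty. The best the induction can deliver is therefore a secondary constant of the shape $5d^{13/3} + (n-2)|\mathcal{E}|(d-1)$, growing with the number of variables. No bookkeeping of the exponent $13/3$ fixes this; it is intrinsic to slicing off one variable at a time. This is precisely why Cafure \& Matera do not proceed this way: they reduce in a \emph{single} step to plane curves by intersecting with affine $2$-planes, control the proportion of $2$-planes whose section fails to be absolutely irreducible via Kaltofen's effective Bertini theorem (quoted in this paper as Lemma \ref{bert}), and recover $N$ by averaging over all slices, since every point of the hypersurface lies on the same number of $2$-planes. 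The error is then paid once, not $n-2$ times.

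A secondary issue is that the effective Bertini input you invoke --- that for the pencil of parallel hyperplanes $\{x_{n}=t\}$ all but $O(d^{\alpha})$ values of $t \in \F_{q}$ give an absolutely irreducible section of dimension $n-2$ --- is asserted rather than supplied, and it is not the statement available in the literature. Kaltofen's theorem bounds the number of bad affine $2$-plane slices among all $\b{\xi} \in \F_{q}^{3n+1}$; converting this into a bound on the exceptional set of a fixed one-parameter family of codimension-one sections requires an effective Bertini--Noether-type argument (irreducibility over $\bar{\F}_{q}(t)$ versus irreducibility of specialisations), which is exactly the delicate point your resultant remark gestures at without carrying out. Your base case is essentially sound --- the corrections between the affine curve and its smooth projective model are $O(d^{2})$, comfortably inside $5d^{13/3}$ --- and the reduction of the small-$q$ regime via $N \le dq^{n-1}$ is the standard and correct move; the failure is concentrated in the inductive step.
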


This substantial improvement of (\ref{schC}) owes much to the work of Kaltofen \cite{Kal95} for providing an effective version of the first Bertini theorem. We shall discuss this further in the next section.

\subsection{Upper bounds on the number of $\F_{q}$-points} 

We will also require bounding from above, the number of points on an affine variety of prescribed degree. Thus note the following well known result (for example see \cite[Lemma 2.1]{CafMat06}).

\begin{lemma} \label{uppbd1}
Any $\F_{q}$-variety $V$, of dimension $r$ and degree $d$ in $\F_{q}^{n}$ satisfies
\[ \#(V \cap \F_{q}^{n}) \le dq^{r}. \]
\end{lemma}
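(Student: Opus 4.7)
The plan is to prove the bound by induction on $r = \dim V$, after reducing to the case where $V$ is irreducible. If $V = V_{1} \cup \cdots \cup V_{s}$ is the decomposition into irreducible components with $d_{i} = \deg V_{i}$ and $r_{i} = \dim V_{i} \le r$, then, under the standard convention $\sum_{i} d_{i} \le d$, the bound for $V$ follows at once from
\[ |V(\F_{q})| \le \sum_{i} |V_{i}(\F_{q})| \le \sum_{i} d_{i} q^{r_{i}} \le q^{r} \sum_{i} d_{i} \le d q^{r}. \]
So it suffices to handle the irreducible case.

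The base case $r=0$ is immediate: an irreducible variety of dimension zero and degree $d$ consists of at most $d$ geometric points, hence at most $d$ points over $\F_{q}$, matching $d q^{0} = d$.

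For the inductive step, suppose $V$ is irreducible of dimension $r \ge 1$ and degree $d$, and assume the lemma for all varieties of strictly smaller dimension. Since $\dim V \ge 1$, not every coordinate function can be constant on $V$, for otherwise $V$ would be a single closed point. After relabeling the coordinates I may assume that $x_{n}$ is non-constant on $V$, so that $V$ is not contained in any hyperplane of the form $\{x_{n} = c\}$. For each $c \in \F_{q}$ set $V_{c} := V \cap \{x_{n} = c\}$. Krull's Hauptidealsatz gives $\dim V_{c} \le r - 1$, and Bezout's theorem (applied to the proper intersection of $V$ with the hyperplane) yields $\deg V_{c} \le d \cdot 1 = d$. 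The inductive hypothesis applied to each $V_{c}$ then produces $|V_{c}(\F_{q})| \le d q^{r-1}$. Partitioning $V(\F_{q})$ according to the $x_{n}$-coordinate of each point gives
\[ |V(\F_{q})| \;=\; \sum_{c \in \F_{q}} |V_{c}(\F_{q})| \;\le\; q \cdot d q^{r-1} \;=\; d q^{r}, \]
as required.

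I do not expect any serious obstacle: the argument is an entirely elementary slicing induction, and the only mildly technical points are definitional bookkeeping, namely verifying that $\sum_{i} \deg V_{i} \le \deg V$ for the chosen convention, and ruling out the degenerate possibility that every coordinate is constant on an irreducible positive-dimensional $V$. Both are standard under the conventions employed in \cite{CafMat06}.
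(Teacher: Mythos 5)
The paper offers no proof of this lemma at all: it is quoted as a ``well known result'' with a pointer to \cite[Lemma 2.1]{CafMat06}. Your slicing induction is correct and is essentially the standard argument behind that reference: reduce to the irreducible case via the convention $\deg V=\sum_i \deg V_i$, then fibre over the $q$ values of a coordinate that is non-constant on $V$, using the B\'ezout inequality and the Hauptidealsatz to bound the degree and dimension of each slice before invoking the inductive hypothesis. I see no gaps.
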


In order to keep some control on the number of singular points on our variety we need to estimate the number of points on the intersection of two hypersurfaces, with no common component. To prove the next lemma it will be necessary to utilize the $B\acute{e}zout \; inequality$ (for example see \cite[p.148]{Ful98}): if $V$ and $W$ are affine varieties in $\F_{q}^{n}$ then
\begin{equation} \label{bez}
\deg(V \cap W) \le \deg V \deg W.
\end{equation}

\begin{lemma} \label{uppbd2}
Let $F_{1},F_{2} \in \F_{q}[\x]$ be non-zero polynomials of degrees $d_{1}$ and $d_{2}$ without a common factor in $\bar{\F}_{q}[\x]$ then the variety $V$ defined by $F_{1},F_{2}$ satisfies
\[ \#(V \cap \F_{q}^{n}) \le d_{1}d_{2}q^{n-2}. \]
\end{lemma}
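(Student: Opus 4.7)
The plan is to decompose the variety $V$ into its irreducible components, bound the dimension of each, and then apply Lemma \ref{uppbd1} componentwise, using the B\'ezout inequality (\ref{bez}) to control the total degree.

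First I would note that because $F_{1}$ and $F_{2}$ have no common factor in $\bar{\F}_{q}[\x]$, the hypersurfaces $V(F_{1})$ and $V(F_{2})$ share no irreducible component. Consequently every irreducible component of their intersection $V=V(F_{1})\cap V(F_{2})$ has dimension strictly less than $n-1$, that is, at most $n-2$. (A component of $V$ of dimension $n-1$ would be a component of each $V(F_{i})$, contradicting coprimality.) Writing $V = C_{1}\cup\cdots\cup C_{s}$ as a union of irreducible components, we therefore have $\dim C_{i} \le n-2$ for every $i$.

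Next I would invoke the B\'ezout inequality (\ref{bez}) applied to $V(F_{1})$ and $V(F_{2})$, whose degrees are at most $d_{1}$ and $d_{2}$ respectively, to obtain
\[ \deg V = \sum_{i=1}^{s} \deg C_{i} \le d_{1}d_{2}. \]
Now Lemma \ref{uppbd1} applied to each component $C_{i}$ gives
\[ \#(C_{i}\cap \F_{q}^{n}) \le (\deg C_{i})\, q^{\dim C_{i}} \le (\deg C_{i})\, q^{n-2}. \]
Summing over $i=1,\ldots,s$ and using the bound on $\deg V$,
\[ \#(V\cap \F_{q}^{n}) \le \sum_{i=1}^{s} (\deg C_{i})\, q^{n-2} \le d_{1}d_{2}\, q^{n-2}, \]
which is the desired estimate.

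The only subtle point is the dimension bound on the components, but this is immediate once one accepts that the absence of a common factor in $\bar{\F}_{q}[\x]$ precludes a shared irreducible component over the algebraic closure; everything else is a direct assembly of Lemma \ref{uppbd1} with the B\'ezout inequality, so I do not anticipate any real obstacle.
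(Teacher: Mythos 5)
Your proposal is correct and follows essentially the same route as the paper: coprimality over $\bar{\F}_{q}$ forces $\dim V \le n-2$, the B\'ezout inequality gives $\deg V \le d_{1}d_{2}$, and Lemma \ref{uppbd1} converts this into the point count. The only difference is that you apply Lemma \ref{uppbd1} componentwise and sum, whereas the paper applies it to $V$ directly; this is a presentational refinement, not a different argument.
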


\begin{proof}
Since $F_{1}$ and $F_{2}$ have no common factors in $\bar{\F}_{q}[\x]$, then $V$ is an $\F_{q}$ variety of dimension $n-2$. From the $B\acute{e}zout \; inequality$ (\ref{bez}), $\deg V \le d_{1}d_{2}$. The result then follows by Lemma \ref{uppbd1}.
\end{proof}

\subsection{Non-singular points on curves}

Finally we will also need to estimate the number of non-singular points on an absolutely irreducible curve. The following lemma is due to the work of Leep \& Yeomans \cite{LeeYeo94}. 

\begin{lemma} \label{leeyeo}
Let $P \in \F_{q}[x,y]$ be an absolutely irreducible polynomial of degree $d$. Then the number $N'$ of non-singular zeros of $P$ satisfies
\begin{eqnarray*}
N' \ge q + 1 - \tfrac{1}{2} (d-1)(d-2) [2\sqrt{q}], 
\end{eqnarray*}
where $[\gamma]$ denotes the least integer not exceeding $\gamma$.
\end{lemma}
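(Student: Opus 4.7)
The plan is to apply Serre's refinement of the Hasse--Weil bound to a smooth projective model of the affine curve $\{P = 0\}$ and then translate the resulting estimate back into a count of non-singular affine zeros of $P$. First I would homogenise $P$ to obtain the projective closure $\bar C \subset \Proj^2_{\F_q}$, and let $\pi : \tilde C \to \bar C$ be its normalisation, so that $\tilde C$ is an absolutely irreducible smooth projective curve over $\F_q$; write $g$ for its geometric genus.

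Next I would combine two classical inputs. The plane-curve genus formula gives $g = \tfrac{1}{2}(d-1)(d-2) - \sum_{p} \delta_{p}$, where the sum runs over the singularities of $\bar C$ over $\bar{\F}_{q}$ and $\delta_{p} \ge \binom{m_{p}}{2}$ with $m_{p}$ the multiplicity of $p$. Serre's refinement of Hasse--Weil then gives $\#\tilde C(\F_{q}) \ge q + 1 - g[2\sqrt{q}]$, which I would express in terms of $\tfrac{1}{2}(d-1)(d-2)$ minus the sum of $\delta_{p}[2\sqrt{q}]$.

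To convert this into a lower bound on $N'$, I would note that $\pi$ is a bijection over the non-singular locus of $\bar C$, while above any singular point of multiplicity $m_{p}$ it has at most $m_{p}$ preimages. Each $\F_{q}$-point of $\tilde C$ therefore either maps to an affine non-singular zero of $P$, to a non-singular point of $\bar C$ on the line at infinity (of which there are at most $d$ by B\'{e}zout), or to a singular point of $\bar C$ (contributing at most $\sum_{p} m_{p}$ preimages in total). This produces an inequality of the shape
\[ N' \ge \#\tilde C(\F_{q}) - d - \sum_{p} m_{p}, \]
which I would then combine with the two displays of the previous paragraph to extract the claimed bound.

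The main obstacle, and the real content of the lemma, is the final numerical bookkeeping: one must verify that the genus savings $\sum_{p} \delta_{p} [2\sqrt{q}]$ dominate both the correction $\sum_{p} m_{p}$ from extra normalisation fibres and the $d$ loss at the line at infinity. The elementary inequality $\binom{m}{2}[2\sqrt{q}] \ge m$ (valid for $m \ge 2$ whenever $q \ge 1$) shows that each singularity essentially pays for its own preimages, so the genuinely delicate step is absorbing the boundary contribution of $d$ points at infinity. This is precisely where the argument of Leep \& Yeomans has to supply a sharper treatment -- for instance via a judicious change of projective coordinates, or by accounting for singularities and boundary points simultaneously -- in order to obtain the clean bound stated in the lemma.
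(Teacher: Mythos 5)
Your proposal does not close, and you say so yourself: the step where the loss of $d$ points on the line at infinity must be absorbed is exactly the step you leave to ``the argument of Leep \& Yeomans.'' As written, your chain of inequalities yields at best
\[ N' \ \ge\ q+1-\tfrac{1}{2}(d-1)(d-2)[2\sqrt{q}] \;+\; \sum_{p}\delta_{p}[2\sqrt{q}] \;-\; \sum_{p} m_{p} \;-\; d, \]
and while the observation $\binom{m_p}{2}[2\sqrt{q}]\ge m_{p}$ lets each singularity pay for its own normalisation fibre, nothing in your argument pays for the trailing $-d$. When $\bar C$ is smooth (e.g.\ a non-singular plane curve of low degree, where $d$ can exceed $\tfrac12(d-1)(d-2)$), there are no delta invariants to draw on at all, so the deficit is real and your method proves only the weaker bound $N'\ge q+1-\tfrac12(d-1)(d-2)[2\sqrt q]-d$. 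That missing absorption is the entire content of the lemma, so this is a genuine gap rather than a routine detail.

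For comparison, the paper does not reprove any of this geometry. It quotes two packaged results from Leep \& Yeomans: their Corollary 1, which states $|N'+S-(q+1)|\le g([2\sqrt q]-1)+\tfrac12(d-1)(d-2)$ where $S$ is the number of $\F_q$-rational singular affine zeros (the term $\tfrac12(d-1)(d-2)$ and the factor $[2\sqrt q]-1$ already encode the bookkeeping at infinity and over singular points that you are trying to redo), and their Lemma 1, the genus bound $g\le\tfrac12(d-1)(d-2)-S$. Combining these, one needs $(\tfrac12(d-1)(d-2)-g)([2\sqrt q]-1)\ge S$, which is immediate since $[2\sqrt q]\ge 2$. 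If you want to prove the lemma from first principles along your lines, you must supply the sharper treatment of the boundary and singular contributions that turns your $[2\sqrt q]$ into $[2\sqrt q]-1$ and eliminates the $-d$; citing Serre's bound and the genus--degree formula alone is not enough.
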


\begin{proof}
Write $S$ to denote the number of $\F_{q}$ singular zeros of $P$. Then if the curve defined by $P(x,y)=0$ has genus $g$, it follows from \cite[Corollary 1]{LeeYeo94} that
\[ | N' + S -(q+1)| \le g([2\sqrt{q}]-1) + \tfrac{1}{2}(d-1)(d-2). \]
Next we use the above estimate together with the following bound on the genus
\[ g \le \tfrac{1}{2}(d-1)(d-2)-S, \] 
which comes from \cite[Lemma 1]{LeeYeo94}, to obtain the required bound
\[ N' \ge q+1 - \tfrac{1}{2}(d-1)(d-2)[2\sqrt{q}]. \]
\end{proof}

\section{Proof of Theorems \ref{thm2} and \ref{thm3}}

Our strategy will be relatively straight forward: to prove Theorem \ref{thm2} we shall bound from below the total number of points on the hypersurface $G(\x)=0$, whilst simultaneously bounding from above, the number of singular points of $G(\x)=0$ plus  the number of points on the intersection $G(\x)=H(\x)=0$; for Theorem \ref{thm3} we shall use an effective version of the first Berini theorem and apply Lemma \ref{leeyeo} to obtain the desired conclusion.

\begin{proof}[Proof of Theorem \ref{thm2}]

If we let
\[ S_{1} = \# \{ \x \in \F_{q}^{n} : G(\x)=0 \; \;  and \; \; \nabla G(\x) = \b{0} \} \]
and 
\[ S_{2} = \# \{ \x \in \F_{q}^{n} : G(\x)=0 \; \;  and \; \; H(\x) = 0 \}, \]
then it follows that we have a non-singular point of $G$ which not a zero of $H$ provided that $N - S_{1}-S_{2}>0$, where $N$ denotes the number of points of $G$.

If $\nabla G$ is identically zero then it follows that $G(x_{1},\ldots,x_{n}) = M(x_{1}^{p},\ldots,x_{n}^{p})$, where $p$ denotes the characteristic of $\F_{q}$, for some form $M$. Consequently over $\bar{\F}_{q}[\x]$, $G(\x)$ factors into $M'(x_{1},\ldots,x_{n})^{p}$ for some form $M'$, contradicting that $G$ is absolutely irreducible. Therefore some component of $\nabla G$ is non-zero, $\frac{\partial G}{\partial x_{1}}$ say. 

It is clear that $G$ and $\frac{\partial G}{\partial x_{1}}$ have no common factor in $\bar{\F}_{q}[\x]$, therefore by Lemma \ref{uppbd2} it follows that $S_{1} \le d(d-1)q^{n-2}$. Moreover since $G$ and $H$ do not have a common component it also follows that $S_{2} \le deq^{n-2}$. Hence by Theorem \ref{cafmat} 
\[ N - S_{1} - S_{2} \ge  q^{n-1} - (d-1)(d-2)q^{n-3/2} - (5d^{13/3} + d(d-1) + de)q^{n-2}. \]
It is clear that $N -S_{1} - S_{2}>0$ provided that
\[  q - (d-1)(d-2)\sqrt{q} - 5d^{13/3} - d(d+e-1)> 0, \]
from which the conclusion follows.
\end{proof}

Before proving Theorem \ref{thm3}, we need to introduce some notation. Let $L$ be a field and consider a polynomial $f \in L[x_{0},x_{1},\ldots,x_{n}]$. When $\b{\xi} \in L^{3n+1}$, we write $f|_{\b{\xi}}=f|_{\b{\xi}}(X,Y)$ to denote the sliced polynomial
\[ f(\xi_{0}+X,\xi_{1}+\xi_{n+1}X+\xi_{2n+1}Y,\ldots,\xi_{n}+\xi_{2n}X+\xi_{3n}Y). \]

Next note the following result of Kaltofen \cite[p.285]{Kal95} which is made explicit by Cafure \& Matera \cite[Corollary 3.2]{CafMat06}.

\begin{lemma} \label{bert}
Let $f \in \F_{q}[x_{0},\ldots,x_{n}]$ be an absolutely irreducible polynomial of degree $d \ge 2$. Then the number of slices $\b{\xi} \in \F_{q}^{3n+1}$, for which the polynomial $f|_{\b{\xi}}$ is not absolutely irreducible is at most $\frac{1}{2}(3d^{4}-4d^{3}+5d^{2})q^{3n}$.
\end{lemma}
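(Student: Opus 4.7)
The plan is to construct a nonzero polynomial $B \in \bar{\F}_{q}[\xi_{0},\ldots,\xi_{3n}]$ of degree at most $\tfrac{1}{2}(3d^{4}-4d^{3}+5d^{2})$ which vanishes at every slice $\b{\xi}$ for which $f|_{\b{\xi}}$ fails to be absolutely irreducible. Once such a $B$ is produced, the set of bad slices is contained in the hypersurface cut out by $B$, and Lemma~\ref{uppbd1} (with $r=3n$) bounds its number of $\F_{q}$-points by $\deg(B)\cdot q^{3n}$, giving the asserted estimate immediately.

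To build $B$, I would appeal to Noether's classical irreducibility theorem: there is a universal polynomial $\Psi$ in the coefficients of a generic bivariate polynomial of degree $d$ which vanishes on the coefficient tuple of every non-absolutely-irreducible bivariate degree-$d$ polynomial, and which is itself not identically zero on the generic coefficient space. Each coefficient $[X^{i}Y^{j}]\,f|_{\b{\xi}}$ of the bivariate polynomial $f|_{\b{\xi}}(X,Y)$, viewed as a polynomial in $\b{\xi}$, has degree at most $d$ in $\b{\xi}$: the substitution defining the slice replaces each $x_{j}$ by a quantity linear in $\b{\xi}$ (and affine in $X,Y$), so plugging into $f$ (of total degree $d$) yields an expression of total degree $d$ in the combined variables $(\b{\xi},X,Y)$. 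Setting
\[ B(\b{\xi}) \;:=\; \Psi\bigl(\{[X^{i}Y^{j}]\,f|_{\b{\xi}}\}_{i+j\le d}\bigr) \]
produces a polynomial with the correct vanishing property and satisfying $\deg B \le d\cdot \deg \Psi$.

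That $B$ is not identically zero follows from the (non-effective) first Bertini theorem: since $f$ is absolutely irreducible of degree $d\ge 2$, a generic plane section of $\{f=0\}$ remains absolutely irreducible, hence $\Psi$ does not vanish at the coefficient tuple of that particular slice, forcing $B\not\equiv 0$.

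The crux, and the main obstacle, is the quantitative estimate $\deg \Psi \le \tfrac{1}{2}(3d^{3}-4d^{2}+5d)$, which combined with the composition bound above yields the target inequality on $\deg B$. This is precisely the content of Kaltofen's effective Noether theorem \cite{Kal95}, put into the explicit form above by Cafure \& Matera \cite[Corollary 3.2]{CafMat06}. The remainder of the argument is formal, so I would invoke their quantitative bound directly rather than reprove it.
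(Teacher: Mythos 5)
The paper offers no proof of this lemma at all: it is stated as a direct quotation of Kaltofen's effective Noether/Bertini theorem in the explicit form of Cafure \& Matera \cite[Corollary 3.2]{CafMat06}, so there is no internal argument to compare yours against. Your outline --- Noether irreducibility form $\Psi$, the bound $\deg_{\b{\xi}}\bigl([X^{i}Y^{j}]f|_{\b{\xi}}\bigr)\le d$ (correct, although the composed expression has total degree up to $2d$ in the combined variables $(\b{\xi},X,Y)$, not $d$; only the degree in $\b{\xi}$ alone is $\le d$), and then Lemma \ref{bert} via Lemma \ref{uppbd1} applied to the hypersurface $B=0$ in $\F_{q}^{3n+1}$ --- is a faithful account of how the cited proof is organized, but since the decisive quantitative input on $\deg\Psi$ is still imported from the same two references, your proposal is in substance the same citation the paper makes rather than an independent proof.
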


\begin{proof}[Proof of Theorem \ref{thm3}]

We know that $F$ is absolutely irreducible, so by Lemma \ref{bert} provided that
\[ q > \tfrac{1}{2}(3d^{4}-4d^{3}+5d^{2}), \] 
there exists a slice $\xi \in \F_{q}^{3n+1}$ such that $F|_{\xi}(\x)=0$ is an absolutely irreducible curve. Hence by Lemma \ref{leeyeo} the number $N'$ of non-singular points on it satisfies
\[ N' \ge q - \tfrac{1}{2}(d-1)(d-2)[2\sqrt{q}] + 1. \]
Note that $N' > 0$ provided that 
\[ q - (d-1)(d-2)\sqrt{q} + 1 > 0. \]
The above inequality clearly holds for $d=1,2$. Consequently if we assume $d \ne 1,2$ and take
\[ q > \tfrac{1}{4}(\alpha + \sqrt{\alpha^{2}-4})^{2}, \]
where $\alpha = (d-1)(d-2)$ then the curve $F|_{\xi}$ has a non-singular point. Finally it is not difficult to see that for $d \ne 1,2$
\[ 6d^{4}-8d^{3}+10d^{2} > (\alpha + \sqrt{\alpha^{2}-4})^{2}.\]
Hence $F(\x) = 0$ will always have a non-singular point whenever
\[ q > \tfrac{1}{2}(3d^{4}-4d^{3}+5d^{2}), \] 
as required.
\end{proof}
\vspace{0.4in}
\b{Acknowledgments:} It is a pleasure to thank the Hausdorff Research Institute for Mathematics, for their kind hospitality where part of this work took place. I would also like to thank Prof.\ Roger Heath-Brown for the valuable and interesting discussions I have had with him on this topic.
\newpage 

\bibliography{Non-singVarieties}
\bibliographystyle{plain}

\end{document}